\documentclass[12pt,twoside]{amsart}
\usepackage{amsmath,amsthm,amsmath,amsrefs}

\usepackage{calc}
\usepackage{setspace}
\usepackage{amsbsy,amsmath,amsfonts,amsthm,amssymb,color}
\usepackage[margin=1in]{geometry}
\setlength{\parindent}{1cm}

\newenvironment{customtheorem}[1]
  {\innercustomtheorem}
  {\endinnercustomtheorem}

%\numberwithin{equation}{section}
\theoremstyle{plain}
\newtheorem{theorem}{Theorem}%[section] %[section]
\newtheorem{lemma}[theorem]{Lemma}

\newtheorem{corollary}[theorem]{Corollary}
\newtheorem{remark}[theorem]{Remark}
\newtheorem*{SWP*}{Smith-Ward Problem}
\newtheorem{proposition}[theorem]{Proposition}

\theoremstyle{definition}

\usepackage{mathrsfs}

\newcommand{\bN}{\mathbb{N}}
\newcommand{\cT}{\mathcal{T}}

\newcommand{\cS}{\mathcal{S}}
\newcommand{\cB}{\mathcal{B}}

\newcommand{\cK}{\mathcal{K}}

\newcommand{\la}{\langle}
\newcommand{\ra}{\rangle}

\newcommand{\cH}{\mathcal{H}}
\newcommand{\cA}{\mathcal{A}}

\newcommand{\bofh}{\cB(\cH)}

\newcommand{\bC}{\mathbb{C}}

\newcommand{\id}{\text{id}}

\newcommand{\bZ}{\mathbb{Z}}
\newcommand{\cI}{\mathcal{I}}

\newcommand{\bF}{\mathbb{F}}

\newcommand{\qofh}{\mathcal{Q}(\mathcal{H})}

\usepackage{tikz}
\usepackage{tikz-cd}

\title[Four-dimensional operator systems without the lifting property]{Four-dimensional operator systems without the lifting property}

\author{Samuel J. Harris}

\address{Northern Arizona University\\
Department of Mathematics \& Statistics\\
801 S. Osborne Dr.\\
Flagstaff, AZ\\
86011 USA}
\email{samuel.harris@nau.edu}

\begin{document}

\begin{abstract}
The purpose of this note is to provide a family of explicit examples of $4$-dimensional operator systems contained in the Calkin algebra $\qofh$ on a separable infinite-dimensional Hilbert space $\mathcal{H}$ for which the identity map has no unital completely positive (ucp) lift to $\mathcal{B}(\mathcal{H})$ with respect to the canonical quotient map $\pi:\mathcal{B}(\mathcal{H}) \to \mathcal{Q}(\mathcal{H})$. More specifically, to each unital $C^*$-algebra $\mathcal{A}$ generated by $n$ unitaries and unital $*$-homomorphism $\rho:\mathcal{A} \to \mathcal{Q}(\mathcal{H})$ with no ucp lift, we construct a four-dimensional operator subsystem $\mathcal{S}$ of $M_{n+1}(\mathcal{A})$ without the lifting property. As a result, for each $n \geq 2$ we exhibit a four-dimensional operator system $\mathcal{S}$ in $M_{n+1}(C_r^*(\mathbb{F}_n))$ without the lifting property. We also obtain explicit examples where the generalized Smith-Ward problem for liftings of joint matrix ranges for three self-adjoint operators has a negative answer.
\end{abstract}

\maketitle

\section{Introduction}

For low-dimensional operator systems, a problem of great interest is determining which operator systems have the lifting property. An operator system $\cS$ is said to have the \textbf{lifting property} (or \textbf{LP} for short) if, whenever $\varphi:\cS \to \cA/\cI$ is a ucp map into a quotient of a unital $C^*$-algebra by an ideal $\cI$, then there exists a ucp map $\psi:\cS \to \cA$ wuch that $q \circ \psi=\varphi$, where $q:\cA \to \cA/\cI$ is the canonical quotient map. One can also talk about an operator system having the \textbf{local lifting property} (or \textbf{LLP} for short), which is when ucp maps $\varphi:\cS \to \cA/\cI$ always have ucp lifts to $\cA$, when restricted to finite-dimensional subsystems of $\cS$. It is well-known that every two-dimensional operator system has the lifting property. In contrast, in the three-dimensional case much less is known. Asking whether every three-dimensional operator system has the lifting property is equivalent to the \textbf{Smith-Ward problem} \cite{Ka14}. Kavruk \cite{Ka14} showed that every three-dimensional operator system has the lifting property if and only if every three-dimensional operator system is exact, and in turn, either of these holds if every three-dimensional operator system is $C^*$-nuclear. Particular attention has been paid to operator subsystems of the Calkin algebra $\qofh=\bofh/\cK(\cH)$, where $\cH$ is a separable and infinite-dimensional Hilbert space. Let $\pi:\bofh \to \qofh$ denote the canonical quotient map. Determining whether a three-dimensional operator system $\cS=\text{span}\{\pi(I),\pi(T),\pi(T)^*\} \subseteq \qofh$ has the lifting property is equivalent to determining whether the identity map $\id:\cS \to \qofh$ has a ucp lift to $\bofh$ \cite[Proposition~11.4]{Ka14}.

Past work has dealt with this problem of determining whether the identity map $\id:\cS \to \qofh$ of a finite-dimensional subsystem of $\qofh$ has a ucp lift to $\bofh$. Paulsen \cite{Pa82} constructed a five-dimensional operator subsystem of $\qofh$ for which this fails. Kavruk \cite{Ka14} constructed a four-dimensional operator subsystem of $M_6$ that does not have the lifting property--in turn, one can obtain a four-dimensional operator subsystem $\cS$ of $\qofh$ where the identity map has no ucp lift (see Remark \ref{remark: 4-dimensional from M6}). However, in Kavruk's proof the ucp map $\varphi$ that has no ucp lift is not explicit, and although the map can be arranged to have codomain $\qofh$ while having no ucp lift to $\bofh$, the ucp map involved is still not explicit. One can replace $\cS$ with $\varphi(\cS)$ and get a subsystem of $\qofh$ of dimension at most four where the identity map has no ucp lift; however, as the map $\varphi$ is implicit, the operator system $\varphi(\cS)$ is also implicit. Related to these works is the recent note of Pop \cite{Pop20}, which proves the existence of finite-dimensional subsystems of $\qofh$ without the lifting property via alternative means; however, those subsystems are also implicit.

The goal of this note is to provide a family of explicit examples of $4$-dimensional subsystems of the Calkin algebra without the lifting property. The main result in this note relies on the existence of a finitely generated unital $C^*$-algebra $\cA$ that has a unital $*$-homomorphism into the Calkin algebra with no ucp lift to $\bofh$ (this was first proven by Anderson \cite{An78}).

\begin{customtheorem}{\ref{theorem: main}}
Let $\mathcal{A}$ be a finitely generated unital $C^*$-algebra for which there exists a unital $*$-homomorphism $\rho:\cA \to \qofh$ that has no ucp lift to $\bofh$. Then there is $n \in \bN$ and a four-dimensional operator subsystem $\cS$ of $M_{n+1}(\cA)$ for which $(\rho \otimes \id_{n+1})_{|\cS}:\cS \to \qofh \otimes M_{n+1}$ has no ucp lift to $\bofh \otimes M_{n+1}$. In particular, $\cS$ does not have the lifting property.
\end{customtheorem}

Paulsen's example \cite{Pa82} appeared soon after Anderson constructed a unital $C^*$-subalgebra $\cA$ of $\qofh$ where the identity map has no ucp lift to $\bofh$. This algebra is generated by two unitaries $u_1,u_2$ and a projection $p$ such that $C^*(\{u_1,u_2\})$ is isomorphic to the reduced group $C^*$-algebra $C_r^*(\bF_2)$ of the free group on two generators. The arguments in \cite{An78} rely only on the facts that $\bF_2$ is residually finite and non-amenable (so that its group von Neumann algebra is not injective \cite{CE77a,CE77b}), and work just as well when replacing $\bF_2$ with a group of the form $G=G_1*G_2$ where $G_1$ and $G_2$ are countable discrete groups satisfying $|G_1| \geq 2$, $|G_2| \geq 2$ and $|G_1|+|G_2| \geq 5$, since such groups contain a copy of $\bF_2$.

We take a slightly different approach than in \cite{Pa82}, by using some of the bimodule properties of the multiplicative domain of a ucp map. For a ucp map $\varphi:\cA \to \cB$, where $\cA$ and $\cB$ are unital $C^*$-algebras, Choi's inequality \cite{Ch74} shows that $\varphi(a)^*\varphi(a) \leq \varphi(a^*a)$ and $\varphi(a)\varphi(a)^* \leq \varphi(aa^*)$ . Moreover, the \textbf{left multiplicative domain} of $\varphi$ is
\[ \mathfrak{m}_{\ell}(\varphi)=\{ a \in \cA: \varphi(a)\varphi(a)^*=\varphi(aa^*)\}=\{ a \in \cA: \varphi(ac)=\varphi(a)\varphi(c), \, \forall c \in \cA\}.\]
Similarly the \textbf{right multiplicative domain} of $\varphi$ is
\[ \mathfrak{m}_r(\varphi)=\{ a \in \cA: \varphi(a)^*\varphi(a)=\varphi(a^*a)\}=\{ a \in \cA: \varphi(ca)=\varphi(c)\varphi(a), \, \forall c \in \mathcal{A}\}.\]
The \textbf{multiplicative domain} of $\varphi$ is $\mathfrak{m}(\varphi)=\mathfrak{m}_{\ell}(\varphi) \cap \mathfrak{m}_r(\varphi)$, which is a $C^*$-subalgebra of $\cA$; moreover, $\varphi_{|\mathfrak{m}(\varphi)}$ is a unital $*$-homomorphism \cite{Ch74}. In yielding Theorem \ref{theorem: main}, our arguments rely on the left multiplicative domain of a ucp map (as well as the two-sided version; see Lemma \ref{lemma: multiplicative domain}).

We also yield several applications of Theorem \ref{theorem: main}. One is that, whenever $\cT$ is a finite-dimensional operator system without the lifting property, then there is a four-dimensional operator subsystem of $M_{n+1}(C_u^*(\cT))$ without the lifting property, where $C_u^*(\cT)$ is the universal $C^*$-algebra of $\cT$ and $n \leq 4 \dim(\cT)$; see Corollary \ref{corollary: universal}. Another application of Theorem \ref{theorem: main} is an explicit four-dimensional operator subsystem of $\qofh$ where the identity map has no ucp lift to $\bofh$ (Corollary \ref{corollary: ext}). We also prove that, for each $n \geq 2$, there is a four-dimensional subsystem of $M_{n+1}(C_r^*(\bF_n))$ without the lifting property (Corollary \ref{corollary: subsystem of free group algebra}). Lastly, all of these examples bear weight on a certain generalization of the Smith-Ward problem, which we explain below.

The Smith-Ward problem stems initially from the \textbf{Smith-Ward Theorem} regarding how the first $N$ essential matrix ranges of an operator can be witnessed exactly by the first $N$ matrix ranges of a certain lift of that operator. For an element $T$ of a unital $C^*$-algebra $\cA$ and for $n \in \bN$, one can define the \textbf{$n$-th matrix range} of $T$ to be the set $W^n(T)=\{\Phi(T): \Phi \in \text{UCP}(\cA,M_n)\}$. By Arveson's extension theorem \cite{Ar69}, $W^n(T)$ does not depend on the $C^*$-algebra containing $T$. (We note that, if $n=1$ and $\cA \subseteq \bofh$, then $W^1(T)$ is the closure of the \textbf{numerical range} of $T$, which is the set of all complex numbers of the form $\la Tx,x \ra$, where $x \in \cH$ and $\|x\|=1$.) We refer the reader to works of Arveson \cite{Ar70,Ar72} for some of the history and development regarding matrix ranges, and works of M\"uller \cite{Mu10} and Narcowich and Ward \cite{NW82} for more information. Smith and Ward \cite{SW80} proved that, if $T \in \bofh$ and $N \in \bN$, then there exists a compact operator $K \in \cK(\cH)$ such that $W^n(\pi(T))=W^n(T+K)$ for all $1 \leq n \leq N$, where $\pi:\bofh \to \qofh$ again denotes the canonical quotient map. This theorem was recently generalized by Li, Paulsen and Poon to the $n$-th joint matrix range of a tuple of operators in $\bofh$ \cite{LPP19}. For a tuple $A_1,...,A_m$ of operators in a unital $C^*$-algebra $\cA$, the \textbf{$n$-th joint matrix range} of $(A_1,...,A_n)$ is
\[ W^n(A_1,...,A_m)=\{(\Phi(A_1),...,\Phi(A_m)): \Phi \in \text{UCP}(\cA,M_n)\},\]
which again does not depend on the unital $C^*$-algebra $\cA$ containing $A_1,...,A_n$. Then Li, Paulsen and Poon prove \cite{LPP19} that, whenever $m \in \bN$ and $A_1,...,A_m \in \bofh$ are self-adjoint and $N \in \bN$, there exist compact operators $K_1,...,K_m$ such that
\[ W^n(\pi(A_1),...,\pi(A_m))=W^n(A_1+K_1,...,A_m+K_m), \, \forall 1 \leq n \leq N.\]
When $m=1$, the problem of whether one can drop the dependence on $N$ above is still open:

\begin{SWP*}
If $T \in \bofh$, then is there a $K \in \cK(\cH)$ such that $W^n(T+K)=W^n(\pi(T))$ for all $n \in \bN$?
\end{SWP*}

The Smith-Ward problem, although only about ucp maps into matrix algebras, can be viewed as a lifting problem. In fact, \cite[p.~304]{Pa82} shows that the Smith-Ward problem is a problem regarding three-dimensional operator systems. In particular, the Smith-Ward problem has a positive answer for all operators in $\qofh$ if and only if every three-dimensional operator system has the lifting property \cite{Ka14}. Moreover, the lifting problem for a three-dimensional operator subsystem $\cS=\text{span}\{\pi(I),\pi(T),\pi(T)^*\} \subseteq \qofh$ is equivalent to the property that the identity map $\id:\cS \to \qofh$ has a ucp lift into $\bofh$ \cite[Proposition~11.4]{Ka14}. In turn, this assertion is equivalent to the existence of $K_1,K_2 \in \cK(\cH)$ such that $W^n(\pi(A_1),\pi(A_2))=W^n(A_1+K_1,A_2+K_2)$ for all $n \in \bN$, where $A_1=\text{Re}(T)$ and $A_2=\text{Im}(T)$. Similarly, for a tuple $(A_1,...,A_m) \in \bofh^m$, determining whether there are $K_1,...,K_m \in \cK(\cH)$ such that $W^n(\pi(A_1),...,\pi(A_m))=W^n(A_1+K_1,...,A_m+K_m)$ for all $n \in \bN$ is equivalent to determining whether 
\[\text{id}: \text{span}\{I,\pi(A_1),...,\pi(A_m),\pi(A_1)^*,...,\pi(A_m)^*\} \to \qofh\] 
has a ucp lift to $\bofh$. (This fact is well-known and follows directly from Proposition \ref{proposition: generalized SWP and ucp lift}.) This problem is sometimes referred to as the \textbf{generalized Smith-Ward problem} for the tuple $(A_1,...,A_m)$. (As noted in \cite{LPP19}, it suffices to only consider the case when $A_1,...,A_m$ are self-adjoint.) All the four-dimensional examples in this note yield examples of self-adjoint operators $T_1,T_2,T_3 \in \bofh$ such that, whenever $K_1,K_2,K_3 \in \cK(\cH)$, there exists an $m \in \bN$ for which $W^m(\pi(T_1),\pi(T_2),\pi(T_3)) \neq W^n(T_1+K_1,T_2+K_2,T_3+K_3)$. Hence, we yield a new collection of explicit counterexamples to the generalized Smith-Ward problem for three self-adjoint operators. While such examples for three self-adjoint operators are known to exist by work of Kavruk \cite{Ka14}, the existence of such examples rely on the non-exactness of the full group $C^*$-algebra $C^*(*_3\bZ_2)$ of the free product of three copies of $\bZ_2$, and so the examples are not explicit. We note in passing that Corollary \ref{corollary: self-adjoint partial isometry and two projections} gives an example where $T_1$ is a self-adjoint partial isometry (i.e. the difference of two mutually orthogonal projections) and $T_2,T_3$ are projections for which the generalized Smith-Ward problem has a negative answer. We note that the Smith-Ward problem, which is still open, is for two self-adjoint operators (namely, the real and imaginary parts of a single operator $T \in \bofh$), which involves three-dimensional operator systems.

\section{Main results}

We start by showing that, if $\cA$ is a unital $C^*$-algebra generated by $n$ unitaries, then a certain $4$-dimensional operator subsystem of $M_{n+1}(\cA)$ enjoys a unique extension property with respect to homomorphisms on $\cA$. The matrix entries of the spanning elements of $\cS$ actually lie in the operator system spanned by the unitary generators of $\cA$. In this lemma and the rest of the paper, we make the identification $M_{n+1}(\cA) \simeq \cA \otimes M_{n+1}$.

\begin{lemma}
\label{lemma: multiplicative domain}
Suppose that $\cA$ is a unital $C^*$-algebra generated by $n$ unitaries $u_1,...,u_n$. Define 
\[ S=\sum_{j=1}^{n} (u_j \otimes E_{j,j+1} + u_j^* \otimes E_{j+1,j}) \text{ and } J=\sum_{i,j=1}^{n+1} 1 \otimes E_{i,j}.\]
Let $\cS=\text{span}\{1 \otimes I_{n+1},S,1 \otimes E_{11}, J\}$, and let $\rho:\cA \to \cB$ be a unital $*$-homomorphism into a unital $C^*$-algebra $\cB$. If $\psi:\cA \otimes M_{n+1} \to \cB \otimes M_{n+1}$ is a ucp map satisfying $\psi_{|\cS}=(\rho \otimes \id_{n+1})_{|\cS}$, then $\psi=\rho \otimes \id_{n+1}$ on $\cA \otimes M_{n+1}$.
\end{lemma}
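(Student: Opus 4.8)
The strategy is to show that each unitary generator $u_j$, viewed inside $\mathcal{A}\otimes M_{n+1}$ via appropriate matrix units, lands in the multiplicative domain of $\psi$, and then to bootstrap from the $u_j$ to all of $\mathcal{A}\otimes M_{n+1}$ using that the multiplicative domain is a $C^*$-algebra on which $\psi$ is a $*$-homomorphism agreeing with $\rho\otimes\id_{n+1}$.

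First I would record the elementary ucp facts. Since $\psi$ is unital and $1\otimes E_{11}\in\cS$, and $1\otimes E_{11}$ is a projection, $\psi(1\otimes E_{11})=1\otimes E_{11}$ is a projection; by the standard argument (a ucp map sends a projection $p$ to a projection iff $p$ and $1-p$ are both in the multiplicative domain, via Choi's inequality applied to $p$ and $1-p$), $1\otimes E_{11}$ lies in $\mathfrak{m}(\psi)$. The same reasoning shows $J/(n+1)$ is a projection fixed by $\psi$, so $J$, hence $1\otimes E_{ij}$ for all $i,j$ via linear combinations... wait — here is the subtle point: $J$ being in the multiplicative domain only tells us $\psi(Ja)=\psi(J)\psi(a)$ and $\psi(aJ)=\psi(a)\psi(J)$ for all $a$, it does not immediately give us the individual matrix units $1\otimes E_{ij}$. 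So the real first step is: from $\psi(1\otimes E_{11})=1\otimes E_{11}\in\mathfrak m(\psi)$ and $\psi(J/(n+1))=J/(n+1)\in\mathfrak m(\psi)$, together with $\psi(S)=S$, deduce membership of more elements.

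The key computation is with $S$. Note $S^*=S$, and $\psi(S)=S$ as well. Consider $\psi(S^2)$ versus $S^2$: $S^2=\sum_j (u_ju_j^*\otimes E_{jj}+u_j^*u_j\otimes E_{j+1,j+1}) = \sum_j(1\otimes E_{jj}+1\otimes E_{j+1,j+1})$ since the $u_j$ are unitary; this is a diagonal element with entries in $\{1,2\}$, and in particular $S^2\in\spn\{1\otimes E_{ij}\}$. I would show $S^2\in\mathfrak m(\psi)$ by the projection trick applied to a suitable spectral projection, or more directly: because $1\otimes E_{11}$ and $J$ are in the multiplicative domain, I can compute $\psi$ on products like $(1\otimes E_{11})S$, and then use Choi's inequality $\psi(S^*S)\geq\psi(S)^*\psi(S)$ combined with the fact that I already know $\psi(S)=S$ and can pin down $\psi(S^2)=\psi(S^*S)$ by testing against the one-dimensional corners $1\otimes E_{kk}$ which are controlled by the multiplicative-domain relations for $J$ and $E_{11}$ (these generate all diagonal matrix units: $1\otimes E_{jj}$ is obtained since $J$ and $E_{11}$ in $\mathfrak m(\psi)$ force the whole diagonal, after one checks the off-diagonal corners of $J$). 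Once $S$ is shown to be in the left multiplicative domain $\mathfrak m_\ell(\psi)$ — via $\psi(SS^*)=\psi(S^2)=S^2=\psi(S)\psi(S)^*$ — and symmetrically in $\mathfrak m_r(\psi)$, we get $S\in\mathfrak m(\psi)$, hence $u_j\otimes E_{j,j+1}=(1\otimes E_{jj})S(1\otimes E_{j+1,j+1})\in\mathfrak m(\psi)$ for each $j$ (products of multiplicative-domain elements stay in the multiplicative domain since it is an algebra, once we know all diagonal $1\otimes E_{jj}\in\mathfrak m(\psi)$).

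From there the conclusion is quick: $\psi(u_j\otimes E_{j,j+1})=\rho(u_j)\otimes E_{j,j+1}$ (known on $\cS$, and extracted from $S$ via corners), and since $u_j\otimes E_{j,j+1}$ and its adjoint lie in $\mathfrak m(\psi)$, so does the $C^*$-algebra they generate together with all $1\otimes E_{ij}$. I claim this $C^*$-algebra is all of $\cA\otimes M_{n+1}$: the products $(u_j\otimes E_{j,j+1})(u_j^*\otimes E_{j+1,j})$ and iterated words recover $u_j\otimes E_{11}$ for every $j$ (using that $1,2,\dots,n+1$ are connected by the edges $j\to j+1$), hence $u_j\otimes E_{11}\in\mathfrak m(\psi)$ for all $j$; since the $u_j$ generate $\cA$ and $\mathfrak m(\psi)$ is a $C^*$-algebra, $\cA\otimes E_{11}\subseteq\mathfrak m(\psi)$, and then conjugating by the matrix units $1\otimes E_{i1}\in\mathfrak m(\psi)$ gives $\cA\otimes M_{n+1}\subseteq\mathfrak m(\psi)$. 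So $\psi$ is a $*$-homomorphism on all of $\cA\otimes M_{n+1}$ agreeing with $\rho\otimes\id_{n+1}$ on the generating set $\{u_j\otimes E_{j,j+1}\}\cup\{1\otimes E_{ij}\}$, hence $\psi=\rho\otimes\id_{n+1}$.

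The main obstacle I anticipate is the bookkeeping in the middle step: getting from "$1\otimes E_{11}$ and $J$ are fixed and in $\mathfrak m(\psi)$" plus "$S$ is fixed" to "$S\in\mathfrak m(\psi)$", because the only direct handle on $\psi(S^2)$ is Choi's inequality, which gives an operator inequality rather than an equality, and one must use the corner projections to upgrade it to equality. Concretely, one needs to verify that $\psi(S^2)$ has the right diagonal entries — which is where the hypothesis that the $u_j$ are \emph{unitary} (so $S^2$ is a scalar diagonal matrix, independent of $\cA$) is essential — and then conclude equality from $\psi(S^2)\geq S^2$ together with a trace or state argument against the fixed projections. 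Everything after $S\in\mathfrak m(\psi)$ is routine $C^*$-algebra manipulation.
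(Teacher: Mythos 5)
There is a genuine gap, and it sits exactly where you predicted the difficulty would be. Your central computation of $S^2$ is wrong for $n\geq 2$: besides the diagonal terms $u_ju_j^*\otimes E_{jj}+u_j^*u_j\otimes E_{j+1,j+1}$ you have dropped the cross terms $u_ju_{j+1}\otimes E_{j,j+2}$ and their adjoints $u_{j+1}^*u_j^*\otimes E_{j+2,j}$, which survive because $E_{j,j+1}E_{j+1,j+2}=E_{j,j+2}\neq 0$. So $S^2$ is \emph{not} a scalar diagonal matrix; it contains the words $u_ju_{j+1}$, which lie outside $\cS$ and over which you have no a priori control. The Schwarz inequality only gives $\psi(S^2)\geq \eta(S)\eta(S)^*=\eta(S^2)$, and with $\eta(S^2)$ non-diagonal there is no positivity, trace, or corner argument that upgrades this to equality without already knowing how $\psi$ acts on those off-diagonal entries. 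Hence the step ``$S\in\mathfrak m_\ell(\psi)$'' is not established, and everything downstream of it is unsupported. A second, independent error: the $C^*$-algebra generated by $1\otimes E_{11}$ and $J$ inside $1\otimes M_{n+1}$ is invariant under every permutation of the indices $2,\dots,n+1$ (both generators are), so it cannot contain $1\otimes E_{22}$; your claim that these two elements ``force the whole diagonal'' into $\mathfrak m(\psi)$ is false, and the diagonal matrix units cannot be obtained before $S$ has been exploited.

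The paper avoids both problems by never trying to put $S$ itself into the multiplicative domain at the outset. Instead it works with the first-row corners $a_k\otimes E_{1,k+1}$, where $a_k=u_1\cdots u_k$: starting from $1\otimes E_{11}\in\mathfrak m(\psi)$ one gets $\psi(a_1\otimes E_{1,2})=\eta(a_1\otimes E_{1,2})$ from $(1\otimes E_{11})S=a_1\otimes E_{1,2}$, checks $(a_1\otimes E_{1,2})(a_1\otimes E_{1,2})^*=1\otimes E_{11}$ to place $a_1\otimes E_{1,2}$ in the \emph{left} multiplicative domain, and then iterates using the two-term recursion $(a_k\otimes E_{1,k+1})S=a_{k+1}\otimes E_{1,k+2}+a_{k-1}\otimes E_{1,k}$ (this is where the unitarity of the $u_j$ enters, via $a_ku_k^*=a_{k-1}$, rather than through any computation of $S^2$). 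Right-multiplicative-domain membership is then forced by a summation argument: $\sum_{k=1}^n\psi(a_k\otimes E_{1,k+1})^*\psi(a_k\otimes E_{1,k+1})\leq\sum_{k=1}^n\psi(1\otimes E_{k+1,k+1})$, and adding the $E_{11}$ corner makes both sides equal to $1\otimes I_{n+1}$, so the inequality is an equality termwise. Only after this do the diagonal matrix units $1\otimes E_{jj}=(a_{j-1}\otimes E_{1,j})^*(a_{j-1}\otimes E_{1,j})$ land in $\mathfrak m(\psi)$, and only then is $J$ used, via $1\otimes E_{ij}=(1\otimes E_{ii})J(1\otimes E_{jj})$, to capture the off-diagonal matrix units. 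If you want to salvage your write-up, you should replace the $S^2$ step with this corner-and-recursion argument; the final bootstrapping paragraph of your proposal is then essentially correct.
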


\begin{proof}
Define $a_0=1$ and, for each $1 \leq k \leq n$, define $a_k=u_1 \cdots u_k$. To simplify notation, we let $\eta=\rho \otimes \id_{n+1}$. Since $1 \otimes E_{1,1} \in \cS$ is a projection and $\eta$ is a $*$-homomorphism, $\psi(1 \otimes E_{1,1})=\eta(1 \otimes E_{1,1})$ is a projection. It follows that $a_0 \otimes E_{1,1}=1 \otimes E_{1,1}$ belongs to the multiplicative domain $\mathfrak{m}(\psi)$ of $\psi$. Using the fact that $\psi(S)=\eta(S)$, it follows that
\[ \psi(a_1 \otimes E_{1,2})=\psi((1 \otimes E_{1,1})S)=\eta(1 \otimes E_{1,1})\eta(S)=\eta((1 \otimes E_{1,1})S)=\eta(a_1 \otimes E_{1,2}).\]
Therefore, since $\eta$ is a $*$-homomorphism,
\[\psi(a_1 \otimes E_{1,2})\psi(a_1 \otimes E_{1,2})^*=\eta((a_1 \otimes E_{1,2})(a_1 \otimes E_{1,2})^*)=\eta(1 \otimes E_{1,1})=\psi((a_1 \otimes E_{1,2})(a_1 \otimes E_{1,2})^*).\]
Hence, $a_1 \otimes E_{1,2}$ belongs to the left multiplicative domain $\mathfrak{m}_{\ell}(\psi)$ of $\psi$. Working inductively, suppose that $k \in \{1,...,n-1\}$ and that $\psi(a_i \otimes E_{1,i+1})=\eta(a_i \otimes E_{1,i+1})$ for each $i \in \{0,1,...,k\}$. We will show that $\psi(a_{k+1} \otimes E_{1,k+2})=\eta(a_{k+1} \otimes E_{1,k+2})$. First, note that if $0 \leq i \leq k$, then
\[ \psi(a_i \otimes E_{1,i+1})\psi(a_i \otimes E_{1,i+1})^*=\eta(a_i \otimes E_{1,i+1})\eta(a_i \otimes E_{1,i+1})^*=\eta(1 \otimes E_{1,1})=\psi(1 \otimes E_{1,1}),\]
so that $\psi(a_i \otimes E_{1,i+1})\psi(a_i \otimes E_{1,i+1})^*=\psi((a_i \otimes E_{1,i+1})(a_i \otimes E_{1,i+1})^*)$. Thus, $a_i \otimes E_{1,i+1} \in \mathfrak{m}_{\ell}(\psi)$ for $0 \leq i \leq k$. Then one has
\begin{align*}
\psi(a_{k+1} \otimes E_{1,k+2}+a_{k-1} \otimes E_{1,k})&=\psi((a_k \otimes E_{1,k+1})S) \\
&=\eta(a_k \otimes E_{1,k+1})\eta(S) \\
&=\eta((a_k \otimes E_{1,k+1})S) \\
&=\eta(a_{k+1} \otimes E_{1,k+2}+a_{k-1} \otimes E_{1,k}).
\end{align*}
Since $\eta$ and $\psi$ agree on the element $a_{k-1} \otimes E_{1,k}$, they also agree on $a_{k+1} \otimes E_{1,k+2}$ by linearity. By induction, it follows that $\psi(a_k \otimes E_{1,k+1})=\eta(a_k \otimes E_{1,k+1})$ for all $k=1,...,n$. But $(a_k \otimes E_{1,k+1})(a_k \otimes E_{1,k+1})^*=1 \otimes E_{1,1}$ and $\psi(1 \otimes E_{1,1})=\eta(1 \otimes E_{1,1})$, so $a_k \otimes E_{1,k+1} \in \mathfrak{m}_{\ell}(\psi)$ for each $k$. 

We wish to show that $a_k \otimes E_{1,k+1}$ also belongs to the right multiplicative domain $\mathfrak{m}_r(\psi)$ of $\psi$. To this end, we observe that, since $\eta(a_k \otimes E_{1,k+1})=\psi(a_k \otimes E_{1,k+1})$ for all $1 \leq k \leq n$,
\begin{align*}
\sum_{k=1}^n \eta(1 \otimes E_{k+1,k+1})&=\sum_{k=1}^n \eta(a_k \otimes E_{1,k+1})^*\eta(a_k \otimes E_{1,k+1}) \\
&=\sum_{k=1}^n \psi(a_k \otimes E_{1,k+1})^*\psi(a_k \otimes E_{1,k+1}) \\
&\leq \sum_{k=1}^n\psi((a_k \otimes E_{1,k+1})^*(a_k \otimes E_{1,k+1})) \\
&=\sum_{k=1}^n \psi(1 \otimes E_{k+1,k+1}).
\end{align*}
Since $\eta(1 \otimes E_{1,1})=\psi(1 \otimes E_{1,1})$ and $\psi$ is unital, adding $\eta(1 \otimes E_{1,1})$ to both sides of the above inequality yields $1 \otimes I_{n+1}$. Thus, the inequality above is an equality. This forces $\psi(a_k \otimes E_{1,k+1})^*\psi(a_k \otimes E_{1,k+1})=\psi(1 \otimes E_{k+1,k+1})$ for all $1 \leq k \leq n$, so $a_k \otimes E_{1,k+1} \in \mathfrak{m}_{\ell}(\psi) \cap \mathfrak{m}_r(\psi)=\mathfrak{m}(\psi)$. Hence, $\psi$ is both a $*$-homomorphism and equal to $\eta$ on the subalgebra $C^*(\{ a_k \otimes E_{1,k+1}\}_{k=0}^n)$. In particular, since each $a_k$ is unitary, $1 \otimes E_{j,j}$ belongs to $\mathfrak{m}(\psi)$ for each $1 \leq j \leq n+1$. It follows that $\psi$ is a bimodule map over the diagonal subalgebra $1 \otimes \mathcal{D}_{n+1}$; that is, for every $i,j=1,...,n+1$ and $X \in \cA \otimes M_{n+1}$, we have
\[ \psi((1 \otimes E_{i,i})X(1\otimes E_{j,j}))=\psi(1 \otimes E_{i,i})\psi(X)\psi(1 \otimes E_{j,j}).\]

Finally, since $\psi(J)=\eta(J)$ it follows that, for each $1 \leq i,j \leq n+1$,
\[ \psi(1 \otimes E_{i,j})=\psi((1 \otimes E_{i,i})J(1 \otimes E_{j,j}))=\eta(1 \otimes E_{i,i})\eta(J)\eta(1 \otimes E_{j,j})=\eta((1 \otimes E_{i,i})J(1 \otimes E_{j,j})),\]
so that $\psi(1 \otimes E_{i,j})=\eta(1 \otimes E_{i,j})$ for all $i,j$. Since we already know that $\psi(1 \otimes E_{i,i})=\eta(1 \otimes E_{i,i})$ and $\psi(1 \otimes E_{j,j})=\eta(1 \otimes E_{j,j})$, it follows that $1 \otimes E_{i,j}$ is in the multiplicative domain of $\psi$ and the maps $\psi$ and $\eta$ agree on this element. This is enough to show that $\psi(a_k \otimes E_{i,j})=\eta(a_k \otimes E_{ij})$ for all $1 \leq k \leq n$ and $1 \leq i,j \leq n+1$, and that each of the elements $a_k \otimes E_{i,j}$ is in $\mathfrak{m}(\psi)$. Since $\mathcal{A} \otimes M_{n+1}$ is generated as a $C^*$-algebra by such elements, it follows that $\psi=\eta$.
\end{proof}

\begin{theorem}
\label{theorem: main}
Let $\mathcal{A}$ be a finitely generated unital $C^*$-algebra for which there exists a unital $*$-homomorphism $\rho:\cA \to \qofh$ that has no ucp lift to $\bofh$. Then there is $n \in \bN$ and a four-dimensional operator subsystem $\cS$ of $M_{n+1}(\cA)$ for which $(\rho \otimes \id_{n+1})_{|\cS}:\cS \to \qofh \otimes M_{n+1}$ has no ucp lift to $\bofh \otimes M_{n+1}$. In particular, $\cS$ does not have the lifting property.
\end{theorem}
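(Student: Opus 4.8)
The plan is to combine Lemma~\ref{lemma: multiplicative domain} with Arveson's extension theorem: a hypothetical ucp lift of $(\rho\otimes\id_{n+1})|_{\cS}$ extends to a ucp map on $M_{n+1}(\cA)$ which, by the Lemma, must be an honest ucp lift of $\rho\otimes\id_{n+1}$; compressing such a lift to a corner then produces a ucp lift of $\rho$ itself, contradicting the hypothesis.

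First I would put $\cA$ into the form required by Lemma~\ref{lemma: multiplicative domain}. Writing each of the finitely many generators of $\cA$ in terms of its real and imaginary parts, $\cA$ is generated by finitely many self-adjoint contractions $x_1,\dots,x_n$; replacing each $x_j$ by the unitary $u_j=x_j+i(1-x_j^2)^{1/2}$, which satisfies $x_j=\tfrac12(u_j+u_j^*)$, we may assume $\cA=C^*(u_1,\dots,u_n)$ with each $u_j$ unitary (and, adding a redundant generator if needed, that $n\ge 2$; note $\cA\ne\bC$, since otherwise $\rho$ would be trivially liftable). With $S$, $J$ and $\cS=\spn\{1\otimes I_{n+1},S,1\otimes E_{11},J\}$ as in Lemma~\ref{lemma: multiplicative domain}, a comparison of matrix entries shows these four elements are linearly independent, so $\dim\cS=4$.

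Now suppose, toward a contradiction, that there is a ucp map $\Psi:\cS\to\bofh\otimes M_{n+1}$ with $(\pi\otimes\id_{n+1})\circ\Psi=(\rho\otimes\id_{n+1})|_{\cS}$. By Arveson's extension theorem \cite{Ar69}, extend $\Psi$ to a ucp map $\widetilde\Psi:\cA\otimes M_{n+1}\to\bofh\otimes M_{n+1}$, and set $\psi:=(\pi\otimes\id_{n+1})\circ\widetilde\Psi$. Then $\psi:\cA\otimes M_{n+1}\to\qofh\otimes M_{n+1}$ is ucp and agrees with the $*$-homomorphism $\rho\otimes\id_{n+1}$ on $\cS$, so Lemma~\ref{lemma: multiplicative domain} (with $\cB=\qofh$) gives $\psi=\rho\otimes\id_{n+1}$ on all of $\cA\otimes M_{n+1}$; equivalently, $\widetilde\Psi$ is a ucp lift of $\rho\otimes\id_{n+1}$ along $\pi\otimes\id_{n+1}$. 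Let $\iota:\cA\to\cA\otimes M_{n+1}$ be the unital $*$-homomorphism $\iota(a)=a\otimes E_{11}+\sum_{j=2}^{n+1}1\otimes E_{jj}$ and let $P:\bofh\otimes M_{n+1}\to\bofh$ be the (unital, ucp) compression onto the $(1,1)$-corner. Then $\Phi:=P\circ\widetilde\Psi\circ\iota:\cA\to\bofh$ is unital and ucp; since $\pi\circ P$ equals the $(1,1)$-corner compression of $\qofh\otimes M_{n+1}$ precomposed with $\pi\otimes\id_{n+1}$, we get $\pi\circ\Phi=\rho$. Thus $\Phi$ is a ucp lift of $\rho$ to $\bofh$, contradicting the hypothesis, so $(\rho\otimes\id_{n+1})|_{\cS}$ has no ucp lift. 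Finally, $\qofh\otimes M_{n+1}$ is the quotient of the unital $C^*$-algebra $\bofh\otimes M_{n+1}$ by the ideal $\cK(\cH)\otimes M_{n+1}$, so if $\cS$ had the lifting property then $(\rho\otimes\id_{n+1})|_{\cS}$ would lift, which we have just excluded.

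The substantive content is all in Lemma~\ref{lemma: multiplicative domain}; what remains is bookkeeping, and I expect the only points needing care to be that the composition $(\pi\otimes\id_{n+1})\circ\widetilde\Psi$ genuinely satisfies the hypothesis of the Lemma (so that it becomes a true lift of $\rho\otimes\id_{n+1}$), and that $\iota$ is chosen with the identity, rather than zero, in the complementary block, which is exactly what makes $\Phi$ unital.
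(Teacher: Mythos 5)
Your proof is correct and follows essentially the same route as the paper's: assume a ucp lift of $(\rho\otimes\id_{n+1})|_{\cS}$ exists, extend it by Arveson's theorem, invoke Lemma~\ref{lemma: multiplicative domain} to conclude the composition with $\pi\otimes\id_{n+1}$ is all of $\rho\otimes\id_{n+1}$, and compress to the $(1,1)$-corner to manufacture a ucp lift of $\rho$, a contradiction. Your explicit reduction of a general finite generating set to unitary generators (and the linear-independence check giving $\dim\cS=4$) supplies details the paper leaves implicit, but the argument is the same.
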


\begin{proof}
Let $\pi:\bofh \to \qofh$ be the canonical quotient map, and $\cS$ be the operator system in Lemma \ref{lemma: multiplicative domain}. We will show that there is no ucp lift of $(\rho \otimes \id_{n+1})_{|\cS}$ into $\bofh \otimes M_{n+1}$. Indeed, suppose that there is a ucp map $\varphi:\cS \to \bofh \otimes M_{n+1}$ such that $(\pi \otimes id_{M_{n+1}}) \circ \varphi=(\rho \otimes \id_{n+1})_{|\cS}$. By Arveson's extension theorem \cite{Ar69}, we may extend $\varphi$ to a ucp map defined from $\cA \otimes M_{n+1}$ into $\bofh \otimes M_{n+1}$, which we still denote by $\varphi$. Then we define $\psi=(\pi \otimes \id_{M_{n+1}}) \circ \varphi:\cA \otimes M_{n+1} \to \qofh \otimes M_{n+1}$, which is ucp. Since $\psi_{|\cS}=(\rho \otimes \id_{n+1})_{|\cS}$, Lemma \ref{lemma: multiplicative domain} shows that $\psi=\rho \otimes \id_{n+1}$. But then the mapping $\chi:\cA \to \qofh$ defined by $\chi(x)=(I_{\cH} \otimes E_{11})\varphi(x \otimes I_{n+1})(I_{\cH} \otimes E_{11})$ is a unital completely positive lift of $\rho$ into $\bofh$, which is a contradiction. Thus, no such ucp lift exists for $(\rho \otimes \id_{n+1})_{|\cS}$ into $\bofh \otimes M_{n+1}$. The result follows.
\end{proof}

We first use Theorem \ref{theorem: main} to show that we can construct $4$-dimensional operator systems without LP from any finite-dimensional operator system that does not have LP.

\begin{corollary}
\label{corollary: universal}
Suppose that $\cT$ is a finite-dimensional operator system without the lifting property. Then there is an $n \in \bN$ with $n \leq 4\dim(\cT)$, and a $4$-dimensional operator system $\cS \subseteq M_{n+1}(C_u^*(\cT))$ without the LP.
\end{corollary}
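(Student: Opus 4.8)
The plan is to apply Theorem \ref{theorem: main} with $\cA=C_u^*(\cT)$, so the work reduces to checking its two hypotheses: that $C_u^*(\cT)$ is finitely generated by unitaries, with a control of at most $4\dim(\cT)$ on the number of generators, and that there is a unital $*$-homomorphism $C_u^*(\cT)\to\qofh$ with no ucp lift to $\bofh$.

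For the first hypothesis, fix a linear basis $t_1,\dots,t_d$ of the canonical copy of $\cT$ inside $C_u^*(\cT)$, where $d=\dim(\cT)$; since $C_u^*(\cT)$ is generated as a unital $C^*$-algebra by $\cT$, it is generated by $t_1,\dots,t_d$. After rescaling, each $t_j$ is a linear combination of at most four unitaries (write $t_j=a_j+ib_j$ with $a_j,b_j$ self-adjoint contractions, and use $c=\frac{1}{2}(u+u^*)$ for the unitary $u=c+i(1-c^2)^{1/2}$ when $\|c\|\le 1$), so $C_u^*(\cT)$ is generated by at most $4d$ unitaries. (One can in fact take $\dim(\cT)$ unitaries, choosing the $t_j$ self-adjoint, since an operator system is the complex span of its self-adjoint part.) Writing $n$ for this number of unitary generators, we get $n\le 4\dim(\cT)$ and $C_u^*(\cT)$ finitely generated.

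For the second hypothesis, I would use the fact that the lifting property of a finite-dimensional operator system can be tested against the single quotient $\pi:\bofh\to\qofh$ --- i.e.\ such a system has the lifting property precisely when every ucp map from it into $\qofh$ lifts through $\pi$ --- which is the principle underlying the Calkin-algebra reformulation of the Smith--Ward problem recalled in the Introduction (cf.\ \cite{Ka14}). Since $\cT$ fails the lifting property, there is then a ucp map $\varphi:\cT\to\qofh$ with no ucp lift to $\bofh$. By the universal property of $C_u^*(\cT)$, and since $\qofh$ is a unital $C^*$-algebra, $\varphi$ extends to a unital $*$-homomorphism $\rho:C_u^*(\cT)\to\qofh$; any ucp lift $\Psi:C_u^*(\cT)\to\bofh$ of $\rho$ would restrict to a ucp lift $\Psi|_{\cT}$ of $\varphi$, so $\rho$ has no ucp lift to $\bofh$.

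With both hypotheses verified, I apply Theorem \ref{theorem: main} to $\cA=C_u^*(\cT)$ using the $n$ unitary generators found above (its proof, via Lemma \ref{lemma: multiplicative domain}, works with any generating set by unitaries): this yields a four-dimensional operator subsystem $\cS\subseteq M_{n+1}(C_u^*(\cT))$ for which $(\rho\otimes\id_{n+1})|_{\cS}$ has no ucp lift to $\bofh\otimes M_{n+1}$, hence $\cS$ fails the lifting property; and $n\le 4\dim(\cT)$. I expect the only non-routine point to be the appeal to the Calkin-quotient reduction for $\cT$ in the third paragraph --- the unitary count, the universal-property extension, and the invocation of Theorem \ref{theorem: main} are all routine --- though one should also record that $C_u^*(\cT)\neq\bC$ here (otherwise $\cT$ trivially has the lifting property), which guarantees that the four spanning elements of $\cS$ are linearly independent, so that $\dim\cS=4$.
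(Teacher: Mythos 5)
Your proof is correct and follows essentially the same route as the paper: it reduces failure of the LP for $\cT$ to a non-liftable ucp map $\varphi:\cT\to\qofh$ (the paper cites \cite{Ka14}, Proposition~7.4, for exactly this), extends $\varphi$ to a unital $*$-homomorphism on $C_u^*(\cT)$ via the universal property, counts at most $4\dim(\cT)$ unitary generators by the standard decomposition, and invokes Theorem \ref{theorem: main}. Your added remarks (the possible improvement to $\dim(\cT)$ self-adjoint-derived unitary generators, and the check that the four spanning elements of $\cS$ are independent) are sensible refinements the paper leaves implicit, but they do not change the argument.
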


\begin{proof}
Since $\cT$ does not have the lifting property, there is a ucp map $\varphi:\cT \to \qofh$ with no ucp lift to $\bofh$ \cite[Proposition~7.4]{Ka14}. By the universal property of the universal $C^*$-algebra of an operator system, there is a unique unital $*$-homomorphism $\rho:C_u^*(\cT) \to \qofh$ such that $\rho_{|\cT}=\varphi$. Since $\varphi$ has no ucp lift to $\bofh$, the map $\rho$ does not have a ucp lift to $\bofh$ either. Since $C_u^*(\cT)$ is a $C^*$-cover of $\cT$, any spanning set for $\cT$ generates $C_u^*(\cT)$ as a $C^*$-algebra. As every element of a unital $C^*$-algebra is a linear combination of at most $4$ unitaries and $\dim(\cT)<\infty$, there is a generating set of $n$ elements for $C_u^*(\cT)$ where $n \leq 4\dim(\cT)$. Then Theorem \ref{theorem: main} guarantees the existence of a four-dimensional operator subsystem $\cS$ of $M_{n+1}(\cT)$ for which $(\rho \otimes \id_{n+1})_{|\cS}$ has no ucp lift to $\bofh \otimes M_{n+1}$. Hence, $\mathcal{S}$ does not have the LP.
\end{proof}

Unfortunately the universal $C^*$-algebra of an operator system, even in small dimensions, is very large and in general not tractable. Hence, of particular interest are cases where the operator subsystem of $\cA$ has nice algebraic properties in the $M_{n+1}(\cA)$. Of particular interest is the case when the unital $*$-homomorphism $\rho:\cA \to \qofh$ can be arranged to be injective, while still having no ucp lift to $\bofh$. Such a homomorphism corresponds to an element of the $\text{Ext}$ semigroup of $\cA$ that is not invertible (see \cite{Ar77}). While it is, in general, hard to find examples of $C^*$-algebras whose Ext semigroup is not a group, there are a few nice families of examples (see the discussion before Corollary \ref{corollary: subsystem of free group algebra}). In general, the next corollary follows immediately from Theorem \ref{theorem: main}.

\begin{corollary}
\label{corollary: ext}
Suppose that $\mathcal{A}$ is a finitely generated unital $C^*$-algebra such that $\text{Ext}(\cA)$ is not a group. Let $\rho:\cA \to \qofh$ be an injective unital $*$-homomorphism with no ucp lift to $\bofh$. Then there exists an $n \in \bN$, a $4$-dimensional operator subsystem $\mathcal{S}$ of $M_{n+1}(\cA)$ and a unitary $V:\cH \otimes \bC^{n+1} \to \cH$ such that the mapping $V(\rho \otimes \id_{n+1})(\cdot)V^*:\mathcal{S} \to \qofh$ has no ucp lift to $\bofh$.
\end{corollary}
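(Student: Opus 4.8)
The plan is to deduce this directly from Theorem \ref{theorem: main} by transporting the non-liftable map along the canonical spatial identification $\cH \otimes \bC^{n+1} \cong \cH$. First I would explain why the hypothesis on $\text{Ext}(\mathcal{A})$ produces such a $\rho$: in the unital Busby picture of the $\text{Ext}$ semigroup (see \cite{Ar77}), an extension class is invertible precisely when the corresponding unital $*$-monomorphism $\mathcal{A} \to \qofh$ admits a ucp (equivalently, completely positive) lift to $\bofh$; since $\text{Ext}(\mathcal{A})$ is not a group, some class is non-invertible, giving an injective unital $*$-homomorphism $\rho:\mathcal{A} \to \qofh$ with no ucp lift to $\bofh$. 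Applying Theorem \ref{theorem: main} to $\rho$ (which, being finitely generated and unital, is generated by finitely many unitaries, as in the proof of Corollary \ref{corollary: universal}) yields $n \in \bN$ and the four-dimensional operator subsystem $\mathcal{S} \subseteq M_{n+1}(\mathcal{A})$ of Lemma \ref{lemma: multiplicative domain} for which $(\rho \otimes \id_{n+1})_{|\mathcal{S}}:\mathcal{S} \to \qofh \otimes M_{n+1}$ has no ucp lift to $\bofh \otimes M_{n+1}$.

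Next I would set up the spatial identification. Since $\cH$ is separable and infinite-dimensional, so is $\cH \otimes \bC^{n+1}$, so fix a unitary $V:\cH \otimes \bC^{n+1} \to \cH$. Under the usual $*$-isomorphism $\bofh \otimes M_{n+1} \cong \mathcal{B}(\cH \otimes \bC^{n+1})$, the ideal $\cK(\cH) \otimes M_{n+1}$ corresponds to $\cK(\cH \otimes \bC^{n+1})$, so $\pi \otimes \id_{M_{n+1}}$ is identified with the canonical quotient $\mathcal{B}(\cH \otimes \bC^{n+1}) \to \mathcal{Q}(\cH \otimes \bC^{n+1})$ and $\qofh \otimes M_{n+1} \cong \mathcal{Q}(\cH \otimes \bC^{n+1})$. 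Conjugation by $V$ is a $*$-isomorphism $\mathcal{B}(\cH \otimes \bC^{n+1}) \to \bofh$ carrying compacts to compacts, so it descends to a $*$-isomorphism $\mathcal{Q}(\cH \otimes \bC^{n+1}) \to \qofh$, and the two isomorphisms intertwine the respective quotient maps. Composing with $(\rho \otimes \id_{n+1})_{|\mathcal{S}}$ gives exactly the map $\Phi := V(\rho \otimes \id_{n+1})(\cdot)V^* : \mathcal{S} \to \qofh$; since $\rho$ is injective the map $\rho \otimes \id_{n+1}$ is a unital complete order embedding, and conjugation by $V$ is a $*$-isomorphism, so $\Phi$ is a unital complete order embedding and $\Phi(\mathcal{S})$ is a genuine four-dimensional operator subsystem of $\qofh$.

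Finally I would argue by contradiction. If $\widetilde{\psi}:\mathcal{S} \to \bofh$ were ucp with $\pi \circ \widetilde{\psi} = \Phi$, then $V^*\widetilde{\psi}(\cdot)V:\mathcal{S} \to \bofh \otimes M_{n+1}$ would be ucp and, by the intertwining relations above, would satisfy $(\pi \otimes \id_{M_{n+1}})\bigl(V^*\widetilde{\psi}(\cdot)V\bigr) = (\rho \otimes \id_{n+1})_{|\mathcal{S}}$, contradicting Theorem \ref{theorem: main}. Hence $\Phi$ has no ucp lift to $\bofh$, and $\mathcal{S}$ (together with $V$) is as claimed. The only step requiring any care is the routine bookkeeping in the middle paragraph --- verifying that the identification $\bofh \otimes M_{n+1} \cong \mathcal{B}(\cH \otimes \bC^{n+1})$ and the conjugation by $V$ are compatible with the quotient maps --- and there is no substantive obstacle beyond that; the rest is immediate from Theorem \ref{theorem: main}.
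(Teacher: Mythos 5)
Your proposal is correct and follows essentially the same route as the paper: invoke Theorem \ref{theorem: main} to get the non-liftable map $(\rho \otimes \id_{n+1})_{|\cS}$, then transport it along a unitary $V:\cH \otimes \bC^{n+1} \to \cH$, using that conjugation by $V$ carries $\cK(\cH) \otimes M_{n+1}$ onto $\cK(\cH)$ and hence descends to the Calkin algebras, so a ucp lift of $V(\rho \otimes \id_{n+1})(\cdot)V^*$ would conjugate back to a ucp lift of $(\rho \otimes \id_{n+1})_{|\cS}$. The only difference is that you spell out the intertwining bookkeeping that the paper leaves implicit.
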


\begin{proof}
Any injective unital $*$-homomorphism $\rho:\mathcal{A} \to \qofh$ defines, up to equivalence, an element $[\rho]$ of $\text{Ext}(\cA)$. This element has a ucp lift to $\bofh$ if and only if $[\rho]$ is invertible in $\text{Ext}(\cA)$ \cite{Ar77}. Since $\text{Ext}(\cA)$ is not a group, we can choose $\rho$ so that $[\rho]$ is not invertible in $\text{Ext}(\cA)$. By Theorem \ref{theorem: main}, there is an $n \in \bN$ and a four-dimensional subsystem $\cS$ of $M_{n+1}(\cA)$ for which $(\rho \otimes \id_{n+1})_{|\cS}$ has no ucp lift to $M_{n+1}(\bofh) \simeq \bofh \otimes M_{n+1}$. Since $\cH$ is assumed to be separable and infinite-dimensional, we can choose a unitary $V:\cH \otimes \bC^{n+1} \to \cH$. Then $\bofh \otimes M_{n+1}$ is unitarily equivalent to $\bofh$, and $\cK(\cH) \otimes M_{n+1}$ is unitarily equivalent to $\cK(\cH)$ (via $V$), so the same holds for $\qofh \otimes M_{n+1}$ and $\qofh$. Since $\rho \otimes \id_{n+1}$ has no ucp lift to $\bofh \otimes M_{n+1}$, the map $V(\rho \otimes \id_{n+1})(\cdot)V^*$ has no ucp lift to $\bofh$.
\end{proof}

Once one has an operator system $\cS$ and a ucp map $\varphi:\cS \to \qofh$ without a ucp lift to $\bofh$, one can arrange to have a subsystem of the Calkin algebra where the identity map has no ucp lift to $\bofh$. Indeed, if $\psi:\varphi(\cS) \to \bofh$ was a ucp map such that $\pi \circ \psi=\id_{\varphi(\cS)}$, then $\pi \circ (\psi \circ \varphi)=\varphi$ on $\cS$, so that $\psi \circ \varphi$ is a ucp lift of $\varphi$ to $\bofh$, a contradiction. So, by replacing $\cS$ with $\varphi(\cS)$, we can always arrange for the non-lifting ucp map to be the identity map. In our setting, Corollary \ref{corollary: ext} yields examples where this map $\varphi$ is a complete order isomorphism (in particular, $\dim(\cS)=\dim(\varphi(\cS))$). While already known, the next result immediately follows.

\begin{corollary}
There exist $4$-dimensional operator subsystems of $\qofh$ without the lifting property.
\end{corollary}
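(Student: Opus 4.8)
The plan is to combine the preceding results to produce an \emph{explicit} four-dimensional operator system in $\qofh$ lacking the local lifting property, rather than merely invoking an abstract existence statement. First I would pick a concrete finitely generated unital $C^*$-algebra $\cA$ whose $\text{Ext}$ semigroup is not a group and which admits an injective unital $*$-homomorphism $\rho:\cA \to \qofh$ with no ucp lift to $\bofh$; the canonical choice is $\cA = C_r^*(\bF_n)$ for some $n \geq 2$, using Anderson's construction \cite{An78} (together with the Choi--Effros non-injectivity results \cite{CE77a,CE77b}), since that $\rho$ is indeed injective and unliftable. Applying Corollary \ref{corollary: ext} to this $\cA$ and $\rho$ yields a four-dimensional operator subsystem $\cS \subseteq M_{n+1}(\cA)$ and a unitary $V:\cH \otimes \bC^{n+1} \to \cH$ such that $\varphi := V(\rho \otimes \id_{n+1})(\cdot)V^*: \cS \to \qofh$ has no ucp lift to $\bofh$.

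Next I would observe that $\varphi$ is a complete order isomorphism onto its image. Indeed, $\rho$ is injective, hence (being a unital $*$-homomorphism between $C^*$-algebras) a complete order embedding; tensoring with $\id_{n+1}$ preserves this, and conjugation by the unitary $V$ is a complete order isomorphism. Therefore $\varphi(\cS) \subseteq \qofh$ is a four-dimensional operator system (the dimension is unchanged), and $\varphi$, regarded as a map $\cS \to \varphi(\cS)$, is a unital complete order isomorphism.

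Then I would apply Proposition \ref{proposition: from ucp to identity map}: since $\cS$ sits inside $M_{n+1}(\cA) \cong V(\bofh \otimes M_{n+1})V^*$ --- and, via $\pi \otimes \id_{n+1}$ composed with the unitary, inside $\qofh$ --- and since $\varphi:\cS \to \qofh$ has no ucp lift to $\bofh$, the identity map $\id:\varphi(\cS) \to \qofh$ has no ucp lift to $\bofh$. (Strictly speaking, Proposition \ref{proposition: from ucp to identity map} is phrased for $\cS$ a subsystem of $\qofh$; one first replaces $\cA$ by $\rho(\cA) \subseteq \qofh$, so that $\cS$ becomes a subsystem of $M_{n+1}(\rho(\cA)) \subseteq \qofh \otimes M_{n+1} \cong \qofh$ via $V$, with no loss since $\rho$ is injective.) Finally, since for a finite-dimensional operator system the local lifting property coincides with the lifting property, and any operator system on which the identity map into $\qofh$ fails to lift cannot have LP (by \cite[Proposition~11.4]{Ka14} or directly from the definition), the four-dimensional system $\varphi(\cS) \subseteq \qofh$ has neither LP nor LLP.

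The main obstacle is purely bookkeeping rather than conceptual: one must carefully track the several unitary identifications ($\cH \otimes \bC^{n+1} \cong \cH$, and the induced $\bofh \otimes M_{n+1} \cong \bofh$, $\cK(\cH) \otimes M_{n+1} \cong \cK(\cH)$, $\qofh \otimes M_{n+1} \cong \qofh$) so that the non-liftability statement for $\rho \otimes \id_{n+1}$ with respect to $\pi \otimes \id_{n+1}$ is correctly transported to a non-liftability statement for $\id:\varphi(\cS) \to \qofh$ with respect to $\pi$ itself. One should also note that the result is ``already known'' (hence the modest phrasing), being a consequence of Kavruk's work \cite{Ka14}; the added value here is the explicitness of $\varphi(\cS)$, whose spanning elements have matrix entries in $\spn\{1, u_1, \ldots, u_n, u_1^*, \ldots, u_n^*\}$ inside $M_{n+1}(C_r^*(\bF_n))$.
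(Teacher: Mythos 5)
Your argument is correct and is essentially the paper's own route: Corollary \ref{corollary: ext} (with $\rho$ injective, so that $\varphi$ is a unital complete order isomorphism and $\dim\varphi(\cS)=4$) combined with Proposition \ref{proposition: from ucp to identity map} shows that $\id:\varphi(\cS)\to\qofh$ has no ucp lift, whence $\varphi(\cS)$ fails LP and, being finite-dimensional, LLP. One correction of attribution: the existence of an injective unital $*$-homomorphism $C_r^*(\bF_n)\to\qofh$ with no ucp lift (equivalently, that $\text{Ext}(C_r^*(\bF_n))$ is not a group) is due to Haagerup and Th\o rbjornsen \cite{HT05}, not Anderson \cite{An78} --- Anderson's example is the larger algebra $C^*(\{u_1,u_2,p\})$, which would also serve as the input $\cA$ here but is not $C_r^*(\bF_n)$ itself.
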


\begin{remark}
\label{remark: 4-dimensional from M6}
The existence of a $4$-dimensional operator system without the lifting property is not new; an example was given by Kavruk \cite{Ka15} of such an operator system (in fact, that operator system is a nuclearity detector and hence cannot have the LP). Kavruk's operator system is contained in $M_6$, and for $\cH$ infinite-dimensional, there are embeddings of $M_n$ into $\qofh$. Indeed, one can write $\cH \simeq \bigoplus_{i=1}^n \cH_i$ as Hilbert spaces, where $\cH_i=\cH$ for all $i$. Then take $F_i$ to be the orthogonal projection onto the $i$-th copy of $\cH$, and define $F_{ij}$ to be the partial isometry from $\cH_j$ to $\cH_i$ that sends $h \in \cH_j$ to the same $h$, now regarded in $\cH_i$. Then the mapping $E_{ij} \mapsto F_{ij}$ yields an injective $*$-homomorphism $M_n \to \bofh$, that clearly misses $\cK(\cH)$. Thus, $M_n$ embeds into $\qofh$. Applying this embedding to Kavruk's example yields a $4$-dimensional operator subsystem $\mathcal{S}$ of $\qofh$ without the LP. For that operator system, there is a ucp map $\varphi:\mathcal{S} \to \qofh$ with no ucp lift to $\bofh$. However, the map is not explicit. One can make the map explicit by replacing $\cS$ with $\varphi(\cS)$, at the cost of losing information about the original operator system, since $\varphi$ need not keep track of any of the algebraic properties of a basis for $\cS$.
\end{remark}

Going through the approach in this note yields that our non-lifting ucp map on the four-dimensional subsystem of $\cS$ preserves algebraic properties of the generators of $\cA$.

The nicest family of $C^*$-algebras in this setting are the reduced group $C^*$-algebras of free groups on finitely many generators; i.e., $C_r^*(\mathbb{F}_n)$ where $n \in \bN$ and $n \geq 2$. A groundbreaking result of Haagerup and Th\o{}rbjornsen \cite{HT05} shows that $\text{Ext}(C_r^*(\mathbb{F}_n))$ is not a group. As a result, the fact that $C_r^*(\mathbb{F}_n)$ fails the local lifting property can be witnessed at the level of $4$-dimensional operator systems (of matrix algebras over $C_r^*(\mathbb{F}_n)$). The result on $\text{Ext}(C_r^*(\bF_n))$ was extended to all non-abelian limit groups in \cite{LMH25}. A group $G$ is a \textbf{limit group} if it is finitely generated and, for each finite subset $S$ of $G$, there exists a group homomorphism $\Phi: G \to \mathbb{F}$ into a free group $\mathbb{F}$ such that $\Phi_{|S}$ is injective. For such groups, the failure of the LLP of the reduced group $C^*$-algebra can be witnessed at the four-dimensional operator system level.

\begin{corollary}
\label{corollary: subsystem of free group algebra}
Let $n \in \bN$ with $n \geq 2$ and let $u_1,...,u_n$ be unitary generators of $C_r^*(G)$ where $G$ is a non-abelian limit group (eg. $G=\mathbb{F}_n$ for $n \geq 2$). Then the operator system
\[ \mathcal{S}=\text{span}\{ 1 \otimes I_{n+1}, \sum_{j=1}^n (u_j \otimes E_{j,j+1}+u_j^* \otimes E_{j+1,j}),\sum_{i,j=1}^{n+1} 1 \otimes E_{i,j},1 \otimes E_{1,1}\}\]
contained in $M_{n+1}(C_r^*(G))$ does not have the lifting property. Moreover, there exists an injective unital $*$-homomorphism $\rho:C_r^*(G) \to \qofh$ for which $(\rho \otimes \id_{n+1})_{|\cS}$ does not have a ucp lift to $\bofh$.
\end{corollary}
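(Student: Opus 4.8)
\textbf{Proof proposal for Corollary \ref{corollary: subsystem of free group algebra}.}

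The plan is to deduce the statement directly from Theorem \ref{theorem: main} together with the known results on $\text{Ext}$ for reduced group $C^*$-algebras of limit groups. First I would recall that $G$ is a finitely generated group, so $C_r^*(G)$ is a finitely generated unital $C^*$-algebra; if $G$ has generating set $g_1,\dots,g_n$, then the corresponding canonical unitaries $u_1,\dots,u_n \in C_r^*(G)$ generate $C_r^*(G)$ as a $C^*$-algebra, which puts us exactly in the hypotheses of Lemma \ref{lemma: multiplicative domain}. Thus the operator system $\cS$ displayed in the statement is precisely the four-dimensional operator system $\text{span}\{1 \otimes I_{n+1}, S, 1 \otimes E_{1,1}, J\}$ from that lemma, living inside $M_{n+1}(C_r^*(G))$.

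Next I would invoke the main input: by \cite{HT05} (for $G = \bF_n$) and its extension to all non-abelian limit groups in \cite{LMH25}, the semigroup $\text{Ext}(C_r^*(G))$ is not a group. Hence there is an injective unital $*$-homomorphism $\rho:C_r^*(G) \to \qofh$ whose class $[\rho] \in \text{Ext}(C_r^*(G))$ is not invertible; by Arveson's characterization \cite{Ar77}, this is equivalent to $\rho$ having no ucp lift to $\bofh$ relative to the quotient map $\pi:\bofh \to \qofh$. Now $C_r^*(G)$ is a finitely generated unital $C^*$-algebra admitting such a $\rho$, so Theorem \ref{theorem: main} applies with $\cA = C_r^*(G)$: it produces a four-dimensional operator subsystem of $M_{n+1}(C_r^*(G))$ — and inspecting the proof of Theorem \ref{theorem: main}, this is exactly the $\cS$ from Lemma \ref{lemma: multiplicative domain}, i.e. the one in the statement — for which $(\rho \otimes \id_{n+1})_{|\cS}$ has no ucp lift to $\bofh \otimes M_{n+1}$. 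In particular $\cS$ fails the lifting property, and the injective $\rho$ we selected witnesses the "moreover" clause.

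The last point to address is the parenthetical claim that $\bF_n$ is a non-abelian limit group, so that the special case is genuinely covered: this is immediate, since a free group maps injectively into itself, so $\bF_n$ is trivially a limit group, and it is non-abelian for $n \geq 2$. I do not expect any serious obstacle here — the corollary is essentially a bookkeeping assembly of Theorem \ref{theorem: main}, the Haagerup–Th\o{}rbjørnsen theorem and its limit-group generalization, and Arveson's $\text{Ext}$ dictionary. The only mild subtlety is making sure that the operator system explicitly written in the statement coincides with the one produced abstractly by Theorem \ref{theorem: main}, which amounts to observing that the construction in Lemma \ref{lemma: multiplicative domain} depends only on a choice of $n$ unitary generators, and we have fixed such a choice for $C_r^*(G)$.
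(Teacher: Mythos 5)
Your proposal is correct and is exactly the argument the paper intends (the corollary is stated without proof precisely because it is the assembly you describe: the Haagerup--Th\o{}rbj\o rnsen/limit-group results give that $\text{Ext}(C_r^*(G))$ is not a group, Arveson's dictionary produces an injective unital $*$-homomorphism $\rho$ with no ucp lift, and Theorem \ref{theorem: main} applied to the $n$ unitary generators yields the displayed operator system from Lemma \ref{lemma: multiplicative domain}). No gaps; your observation that the explicit $\mathcal{S}$ in the statement coincides with the one constructed in the lemma is the only point that needed checking, and you checked it.
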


Finding finite-dimensional operator subsystems of $C_r^*(\mathbb{F}_n)$, or matrices over $C_r^*(\mathbb{F}_n)$, without the lifting property, is not new, but reducing it to the $4$-dimensional case appears to be new. In $C_r^*(\mathbb{F}_n)$, one has that the $(2n+1)$-dimensional operator subsystem \[\mathcal{S}_r(\mathbb{F}_n):=\text{span}\{1,u_1,...,u_n,u_1^*,...,u_n^*\}\] 
does not have the lifting property. Indeed, the local lifting property for an operator system $\cT$ is equivalent to there being a unique operator system tensor product structure on $\cT \otimes \bofh$ \cite[Theorem~8.6]{KPTT13}. So, if $\mathcal{S}_r(\mathbb{F}_n)$ had the lifting property, then we would have $\mathcal{S}_r(\mathbb{F}_n) \otimes_{\min} \bofh=\mathcal{S}_r(\mathbb{F}_n) \otimes_{\max} \bofh$ (i.e. the formal identity map would be a complete order isomorphism). But since $\mathcal{S}_r(\mathbb{F}_n)$ ``contains enough unitaries" in $C_r^*(\mathbb{F}_n)$ (i.e. the unitary elements in the operator system generate $C_r^*(\mathbb{F}_n)$), it would follow that $C_r^*(\mathbb{F}_n) \otimes_{\min} \bofh=C_r^*(\mathbb{F}_n) \otimes_{\max}\bofh$ \cite{KPTT13,Ka14}, which is equivalent to $C_r^*(\mathbb{F}_n)$ having the local lifting property \cite{Ki93}, a contradiction. Thus, $\mathcal{S}_r(\mathbb{F}_n)$ does not have the lifting property. However, even when $n=2$ one has $\dim(\mathcal{S}_r(\mathbb{F}_2))=5$, so Corollary \ref{corollary: subsystem of free group algebra} is still a reduction in that case.

The situation when $n=2$ has other benefits. In this case, our operator subsystem of $M_3(C_r^*(\mathbb{F}_2))$ that does not have the LP is given by
\begin{equation}
\mathcal{R}=\text{span} \left\{ \begin{pmatrix} 1 & 0 & 0 \\ 0 & 1 & 0 \\ 0 & 0 & 1 \end{pmatrix},\begin{pmatrix} 0 & u_1 & 0 \\ u_1^* & 0 & u_2 \\ 0 & u_2^* & 0 \end{pmatrix},\begin{pmatrix} 1 & 0 & 0 \\ 0 & 0 & 0 \\ 0 & 0 & 0 \end{pmatrix},\begin{pmatrix} 1 & 1 & 1 \\ 1 & 1 & 1 \\ 1 & 1 & 1 \end{pmatrix}\right\}. \label{subsystem of M3 of C_r*(F_2)}
\end{equation}
We note that $V:=\frac{1}{\sqrt{2}}\begin{pmatrix} 0 & u_1 & 0 \\ u_1^* & 0 & u_2 \\ 0 & u_2^* & 0 \end{pmatrix}$ is a self-adjoint partial isometry; that is, $V^*=V$ and $V^3=V$. In particular, $V=P-Q$ for two mutually orthogonal projections $P,Q$ in $M_3(C_r^*(\mathbb{F}_2))$. The other two non-identity elements in our spanning set for $\mathcal{R}$ are easily seen to be either projections or scalar multiples of projections.

We close this note by relating our results on non-LP operator systems to the generalized Smith Ward problem regarding matrix ranges. We use the following fact, which is known to experts and appears in \cite[Proposition~11.2]{Ka14} in the three-dimensional case. As we were not able to find a proof in the literature, we provide one for the reader's convenience here.

\begin{proposition}
\label{proposition: generalized SWP and ucp lift}
Let $\cS=\text{span}\{I,S_1,...,S_m,S_1^*,...,S_m^*\}$ and $\cT=\text{span}\{I,T_1,...,T_m,T_1^*,...,T_m^*\}$ be finite-dimensional operator systems. Suppose that the linear map $\varphi:\cS \to \cT$ given by $\varphi(I)=I$, $\varphi(S_i)=T_i$ and $\varphi(S_i^*)=T_i^*$ for $i=1,...,m$ is well-defined. Then $\varphi$ is completely positive if and only if $W^n(T_1,...,T_m) \subseteq W^n(S_1,...,S_m)$ for every $n \in \bN$. In particular, $\varphi$ is a complete order isomorphism if and only if $W^n(S_1,...,S_m)=W^n(T_1,...,T_m)$ for all $n \in \bN$.
\end{proposition}

\begin{proof}
If $(A_1,...,A_m) \in W^n(T_1,...,T_m)$, then there is a ucp map $\Phi:\cT \to M_n$ such that $\Phi(T_i)=A_i$ for all $i=1,...,m$. Then if $\varphi:\cS \to \cT$ is ucp as above, the map $\Phi \circ \varphi:\cS \to M_n$ is ucp and $(\Phi \circ \varphi(S_1),...,\Phi \circ \varphi(S_m))=(A_1,...,A_m) \in W^n(S_1,...,S_m)$. This shows that, if $\varphi$ is completely positive, then $W^n(T_1,...,T_m) \subseteq W^n(S_1,...,S_m)$ for all $n$.

Conversely, suppose that $W^n(T_1,...,T_m) \subseteq W^n(S_1,...,S_m)$ for all $n$, and let $Z \in M_k(\cS)$ for some $k \in \bN$. Then by \cite[Lemma~4.1]{KPTT11}, $Z$ is positive in $M_k(\cS)$ if and only if $\psi^{(k)}(Z)$ is positive in $M_{nk}$ for every $n \in \bN$ and ucp $\psi:\cS \to M_n$. We write $Z=X_0 \otimes I+\sum_{i=1}^m (X_i \otimes S_i+Y_i \otimes S_i^*)$. Then $Z$ is positive in $M_k(\cS)$ if and only if $X_0 \otimes I+\sum_{i=1}^n (X_i \otimes \psi(S_i)+Y_i \otimes \psi(S_i)^*)$ is positive in $M_{nk}$ for every ucp map $\psi:\cS \to M_n$. This is the same as asking that $X_0 \otimes I+\sum_{i=1}^m (X_i \otimes A_i+Y_i \otimes A_i^*)$ is positive in $M_{nk}$ for every $(A_1,...,A_n) \in W^n(S_1,...,S_m)$. Thus, if $Z$ is positive in $M_k(\cS)$, then since $W^n(T_1,...,T_m) \subseteq W^n(S_1,...,S_m)$, for every $(B_1,...,B_m) \in W^n(T_1,...,T_m)$ we have that $X_0 \otimes I + \sum_{i=1}^n (X_i \otimes B_i+Y_i \otimes B_i^*)$ is positive in $M_{nk}$. Thus, $\varphi^{(k)}(Z)$ is positive in $M_k(\cT)$. Hence, $\varphi$ is completely positive. The last claim about complete order isomorphisms follows.
\end{proof}

For our purposes, we call a positive contraction $P \in \bofh$ an \textbf{essential projection} if its image $\pi(P) \in \qofh$ is an orthogonal projection. Re-phrasing Corollary \ref{corollary: subsystem of free group algebra} for $\mathbb{F}_2$ in terms of joint matrix ranges, we obtain the following.

\begin{corollary}
\label{corollary: self-adjoint partial isometry and two projections}
There exist essential projections $A,B,C,D$ in $\bofh$ with $\pi(A)\pi(B)=0$ such that, for any choice of compact operators $K_1,K_2,K_3 \in \mathcal{K}(\mathcal{H})$, there exists an $N \in \bN$ so that 
\[W^N(\pi(A)-\pi(B),\pi(C),\pi(D)) \neq W^N(A-B+K_1,C+K_2,D+K_3).\]
\end{corollary}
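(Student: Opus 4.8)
The plan is to re-read Corollary~\ref{corollary: subsystem of free group algebra} in the case $G=\bF_2$, together with the discussion of the operator system $\cR\subseteq M_3(C_r^*(\bF_2))$ immediately preceding it, through the dictionary recalled in the introduction: for self-adjoint operators $A_1,A_2,A_3\in\bofh$, the existence of $K_1,K_2,K_3\in\cK(\cH)$ with $W^n(\pi(A_1),\pi(A_2),\pi(A_3))=W^n(A_1+K_1,A_2+K_2,A_3+K_3)$ for all $n\in\bN$ is equivalent to the identity map $\spn\{I,\pi(A_1),\pi(A_2),\pi(A_3)\}\to\qofh$ having a ucp lift to $\bofh$. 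Recall as well that the three non-scalar spanning elements of $\cR$ are, up to a scalar, projections: the self-adjoint partial isometry $V$ in $\cR$ satisfies $V=P-Q$ for mutually orthogonal projections $P,Q\in M_3(C_r^*(\bF_2))$, while $E_{1,1}$ and $\tfrac13 J$ are projections; thus $\cR=\spn\{I_3,\,P-Q,\,E_{1,1},\,\tfrac13 J\}$ with $PQ=0$.

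First I would produce the offending $4$-dimensional subsystem of the Calkin algebra. Let $\rho\colon C_r^*(\bF_2)\to\qofh$ be the injective unital $*$-homomorphism of Corollary~\ref{corollary: subsystem of free group algebra}, for which $(\rho\otimes\id_3)_{|\cR}$ has no ucp lift, and compose it with conjugation by a unitary $\cH\otimes\bC^3\to\cH$ (as in Corollary~\ref{corollary: ext}) to obtain an injective unital $*$-homomorphism $\theta\colon M_3(C_r^*(\bF_2))\to\qofh$ for which $\theta_{|\cR}$ has no ucp lift to $\bofh$. Since $\theta$ is injective, $\theta_{|\cR}$ is a complete order isomorphism onto $\cR':=\theta(\cR)$. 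If $\psi\colon\cR'\to\bofh$ were a ucp lift of the inclusion $\cR'\hookrightarrow\qofh$, then $\psi\circ\theta_{|\cR}$ would be a ucp lift of $\theta_{|\cR}$, a contradiction; so $\id\colon\cR'\to\qofh$ has no ucp lift to $\bofh$. Writing $p=\theta(P)$, $q=\theta(Q)$, $c=\theta(E_{1,1})$, $d=\theta(\tfrac13 J)$, these are orthogonal projections in $\qofh$ with $pq=\theta(PQ)=0$, and $\cR'=\spn\{I,\,p-q,\,c,\,d\}$.

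Next I would lift. Picking any $A_0=A_0^*\in\bofh$ with $\pi(A_0)=p$ and setting $A=g(A_0)$ for $g(t)=\max(0,\min(1,t))$ yields a positive contraction $A$ with $\pi(A)=g(p)=p$ (since $g$ fixes $\{0,1\}$ pointwise and $p$ is a projection); that is, $A$ is an essential projection lifting $p$. In the same way choose essential projections $B,C,D\in\bofh$ lifting $q,c,d$, so $\pi(A)\pi(B)=pq=0$. Now apply the dictionary to the triple of self-adjoint operators $(A-B,\,C,\,D)$: since $\pi(A-B)=p-q$, $\pi(C)=c$ and $\pi(D)=d$, one has $\spn\{I,\pi(A-B),\pi(C),\pi(D)\}=\spn\{I,p-q,c,d\}=\cR'$, whose identity map into $\qofh$ has no ucp lift to $\bofh$. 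Consequently no $K_1,K_2,K_3\in\cK(\cH)$ satisfy $W^n(\pi(A-B),\pi(C),\pi(D))=W^n(A-B+K_1,C+K_2,D+K_3)$ for all $n$; that is, for every choice of $K_1,K_2,K_3\in\cK(\cH)$ there is an $N\in\bN$ with $W^N(\pi(A)-\pi(B),\pi(C),\pi(D))\neq W^N(A-B+K_1,C+K_2,D+K_3)$, as claimed.

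The argument is essentially bookkeeping on top of Corollary~\ref{corollary: subsystem of free group algebra}, so I do not anticipate a genuine obstacle. The points requiring some care are: (i) that $\theta$ --- a $*$-homomorphism followed by a unitary conjugation --- preserves both the orthogonality $PQ=0$ and self-adjointness, so that $A,B,C,D$ can indeed be taken to be essential projections with $\pi(A)\pi(B)=0$; and (ii) organizing the three self-adjoint operators as $(A-B,C,D)$ rather than as $(A,B,C,D)$, so that $\spn\{I,\pi(A-B),\pi(C),\pi(D)\}$ is precisely the non-liftable $\cR'$ and not something of larger dimension. The only ingredient beyond such bookkeeping is the quoted equivalence between ucp liftability of the identity map on a finite-dimensional operator subsystem of $\qofh$ and preservation of all joint matrix ranges under a compact perturbation.
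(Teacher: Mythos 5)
Your proposal is correct and follows essentially the same route as the paper: identify the non-liftable image $\cR'=\spn\{I,p-q,c,d\}$ of $\cR$ inside $\qofh$ via Corollary \ref{corollary: ext} and Proposition \ref{proposition: from ucp to identity map}, lift the projections to positive contractions (your functional-calculus argument with $g(t)=\max(0,\min(1,t))$ just makes explicit the paper's remark that projections in $\qofh$ lift to positive contractions), and then invoke the equivalence between preservation of all joint matrix ranges under compact perturbation and ucp liftability of the identity on $\spn\{I,\pi(A-B),\pi(C),\pi(D)\}$. No gaps.
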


\begin{proof}
Let $\rho:C_r^*(\bF_2) \to \qofh$ be an injective unital $*$-homomorphism with no ucp lift to $\bofh$. Let $V=\begin{pmatrix} 0 & u_1 & 0 \\ u_1^* & 0 & u_2 \\ 0 & u_2^* & 0 \end{pmatrix}$, $R=\begin{pmatrix} 1 & 0 & 0 \\ 0 & 0 & 0 \\ 0 & 0 & 0 \end{pmatrix}$ and $S=\frac{1}{3}\begin{pmatrix} 1 & 1 & 1 \\ 1 & 1 & 1 \\ 1 & 1 & 1 \end{pmatrix}$. Then $V=P-Q$ for certain mutually orthogonal projections in $M_3(C_r^*(\bF_2))$, while $R$ and $S$ are also projections. Applying Corollary \ref{corollary: ext}, we obtain a self-adjoint partial isometry $T_1$ and two projections $T_2,T_3$ in $\qofh$ for which $\cS:=\text{span}\{I,T_1,T_2,T_3\}$ is completely order isomorphic to the operator system $\mathcal{R}$ from (\ref{subsystem of M3 of C_r*(F_2)}) and for which $\id:\cS \to \qofh$ has no ucp lift to $\bofh$. As any projection in $\qofh$ can be lifted to a positive contraction in $\bofh$, we may choose positive contractions $A,B,C,D \in \bofh$ such that $\pi(A)$ and $\pi(B)$ are projections in $\qofh$ with $\pi(A)\pi(B)=0$ and $T_1=\pi(A)-\pi(B)$, while $\pi(C)=T_2$ and $\pi(D)=T_3$. Suppose that there are $K_1,K_2,K_3 \in \cK(\cH)$ such that $W^n(T_1,T_2,T_3)=W^n(A-B+K_1,C+K_2,D+K_3)$ for all $n$. Define the unital, linear map $\varphi:\cS \to \bofh$ given by $\varphi(T_1)=A-B+K_1$, $\varphi(T_2)=C+K_2$ and $\varphi(T_3)=D+K_3$. Note that $\{I,T_1,T_2,T_3\}$ is linearly independent, so $\varphi$ is well-defined and ucp by Proposition \ref{proposition: generalized SWP and ucp lift}. Thus, $\varphi$ is a ucp lift of the identity map on $\cS$, which is a contradiction. The result follows.
\end{proof}

In other words, the generalized Smith-Ward problem fails even in the case of two projections and a self-adjoint isometry.

\section*{Acknowledgements}

The author was supported in part by a Lawrence Perko Faculty Research Award. The author would also like to thank David Blecher, Travis Russell, Vern Paulsen, Matthew Kennedy and Florin Pop for helpful conversations. The author also thanks an anonymous referee for many helpful suggestions.

\end{document}